\def\cA{\mathcal{A}}
\def\cE{\mathcal{E}}
\def\cJ{\mathcal{J}}
\def\cT{\mathcal{T}}
\def\cR{\mathcal{R}}
\def\w\eta{\widetilde{\eta}}
\newcommand{\bc}{\begin{center}}
\newcommand{\ec}{\end{center}}
\newcommand{\be}{\begin{eqnarray}}
\newcommand{\ee}{\end{eqnarray}}
\newcommand{\ben}{\begin{eqnarray*}}
\newcommand{\een}{\end{eqnarray*}}
\newtheorem{theorem}{Theorem}[section]
\newtheorem{lemma}{Lemma}[section]
\newtheorem{corollary}{Corollary}[section]
\newtheorem{remark}{Remark}[section]
\newtheorem{algorithm}{Algorithm}[section]
\begin{document}
\title{A Full Multigrid Method For Semilinear Elliptic Equation\footnote{This work
was supported in part by National Natural Science Foundations of China
(NSFC 91330202, 11371026, 11001259, 11031006, 2011CB309703)
and the National Center for Mathematics and Interdisciplinary Science, CAS.}}
\author{
Hehu Xie\footnote{LSEC, ICMSEC,
Academy of Mathematics and Systems Science, Chinese Academy of
Sciences, Beijing 100190, China (hhxie@lsec.cc.ac.cn)}
\ \ and \
Fei Xu\footnote{Beijing Institute for Scientific and Engineering
        Computing, Beijing University of Technology,
        Beijing 100124, China
        (xufei@lsec.cc.ac.cn)}
}
\date{}
\maketitle
\begin{abstract}
A full multigrid  finite element method is proposed for semilinear elliptic equations.
The main idea is to transform the solution of the semilinear problem into a series of
solutions of the corresponding linear boundary value problems on the sequence of finite
element spaces and semilinear problems on a very low dimensional space. The linearized
boundary value problems are solved by some multigrid iterations.  Besides the multigrid
iteration, all other efficient numerical methods can also serve as the linear solver for solving
boundary value problems. The optimality of the computational work is also proved.
Compared with the existing multigrid methods which need the
bounded second order derivatives of the nonlinear term, the proposed method only needs the
Lipschitz continuation in some sense of the nonlinear term.

\vskip0.3cm {\bf Keywords.} semilinear elliptic problem, full multigrid,  multilevel correction,
finite element method.

\vskip0.2cm {\bf AMS subject classifications.} 65N30, 65N25, 65L15, 65B99.
\end{abstract}
\section{Introduction}
The purpose of this paper is to study the multigird finite element method for semilinear elliptic problems.
As we know, the multigrid and multilevel methods \cite{Bramble,BramblePasciak,BrambleZhang,BrandtMcCormickRuge,
Hackbusch_Book,ScottZhang,ToselliWidlund,shaidurov1995multigrid,Xu} provide optimal order algorithms for solving
boundary value problems. The error bounds of the approximate solutions obtained from these efficient numerical
algorithms are comparable to the theoretical bounds determined by the finite element discretization.
In the past decade years, some researches about multigrid method for nonlinear
elliptic problem are studied to improve the efficiency of nonlinear elliptic problem solving,
i.e. \cite{shaidurov1995multigrid,J Xu1,J Xu2}.
The Newton iteration is adopted to linearize the nonlinear equation in these existing multigrid methods
and then they need the bounded second order derivatives of the nonlinear terms.
For more information, please refer to \cite{HuangShiTangXue,shaidurov1995multigrid,J Xu1} and the references cited therein.

Recently, a type of multigrid method with optimal efficiency for eigenvalue problems has been proposed
in \cite{LinXie,Xie_IMA,Xie_JCP,XieXie}. The aim of this paper is to present a full multigrid method 
for solving semilinear elliptic problems based on the multilevel correction
scheme \cite{Xie_IMA,Xie_JCP}. The main idea is to design a special low dimensional space to 
transform the solution of the semilinear problem into a series of solutions of the corresponding linear boundary
value problems on the sequence of finite element spaces and semilinear  problems on a very low dimensional space.
For the linearized elliptic problem,
it is not necessary to solve the linear boundary value problem
exactly in each correction step.
Here, we only do some multigrid iteration steps for the linear boundary value problems.
In this new version of multigrid method, solving semilinear elliptic problem will not be much more difficult
than the multigrid scheme for the corresponding linear boundary value problems.
Compared with the existing multigrid methods for the semilinear problem, our method only
needs the Lipschitz continuation in some sense of the nonlinear term. 

An outline of the paper goes as follows. In Section 2, we introduce the
finite element method for the semilinear elliptic problem.
A type of full multigrid method  for the semilinear
elliptic problem is given in Section 3. In Section 4, some numerical examples are provided to
validate the efficiency of the proposed numerical method.
Some concluding remarks are given in the last section.
\section{Discretization by finite element method}
In this paper, the letter $C$ (with or without subscripts) is used to
denote a constant which may be different at
different places. For convenience, the symbols $ x_1 \lesssim y_1$,
$x_2 \gtrsim y_2$ and $x_3 \approx y_3$ mean that $x_1 \leq C_1y_1$, $x_2 \geq c_2y_2$ and
$c_3x_3 \leq y_3 \leq C_3 x_3$. Let $\Omega \subset \cR^d\ (d = 2, 3 )$
denote a bounded convex domain with Lipschitz boundary $\partial\Omega$. We use the standard notation
for Sobolev spaces $W^{s,p}(\Omega)$
and their associated norms $\|\cdot\|_{s,p,\Omega}$ and seminorms
 $|\cdot|_{s,p,\Omega}$ (see, e.g. \cite{Adams}).
For $p = 2$, we denote $H^s(\Omega) = W^{s,2}(\Omega)$ and
$H_0^1(\Omega) = \{v \in H^1(\Omega):v|_{\partial \Omega} = 0\}$,
where $v|_{\partial \Omega} = 0$ is in the sense of trace.
For simplicity, we use $\|\cdot\|_{s}$ to
denote $\|\cdot\|_{s,2,\Omega}$ and $V$ to denote $H_0^1(\Omega)$ in the rest of the paper.

Here, we consider the following type of semilinear elliptic equation:
\begin{equation}\label{semilinear equation}
\left\{
\begin{array}{rcl}
-\nabla\cdot(\mathcal{A}\nabla u)+f(x,u)&=& g, \quad \text{in } \Omega,\\
u&=& 0,\quad \text{on } {\partial\Omega},
\end{array}
\right.
\end{equation}
where $\cA=(a_{i,j})_{d\times d}$ is a symmetric positive definite
matrix with $a_{i,j}\in W^{1,\infty} \ \ (i,j=1,2,\cdots ,d)$,
$f(x,u)$ is a nonlinear function with respect to the second variable.

The weak form of the semilinear problem (\ref{semilinear equation}) can
be described as: Find $u\in V$ such that
\begin{eqnarray}\label{weak_form}
a(u,v)+(f(x,u),v)=(g,v),\quad \forall v\in V,
\end{eqnarray}
where
\begin{eqnarray}
a(u,v)=(\mathcal{A}\nabla u,\nabla v).
\end{eqnarray}
Obviously, $a(u,v)$ is bounded and coercive on $V$, i.e.,
\begin{eqnarray}\label{coercive}
a(u,v)\leq C_a\|u\|_{1,\Omega}\|v\|_{1,\Omega}\ \
\text{ and } \ \ c_a\|u\|_{1,\Omega}^2\leq a(u,u), \quad \forall u,v\in V.
\end{eqnarray}
Then we use the norm $\|w\|_a:=\sqrt{a(w,w)}$ for any $w\in V$ in this paper to replace the standard norm $\|\cdot\|_1$.

In order to guarantee the existence and uniqueness of the problem (\ref{weak_form}), we assume the nonlinear term
$f(\cdot,\cdot)$ satisfy the following assumption.

\textbf{Assumption A}: The nonlinear function $f(x,\cdot)$ satisfies the convex and Lipschitz continuous
conditions as follows
\begin{equation}\label{Assume Nonlinear}
\left\{
\begin{array}{l}
(f(x,w)-f(x,v),w-v)\geq 0,\ \ \ \forall w\in V,\ \forall v\in V,\\
(f(x,w)-f(x,v),\phi)\leq C_f\|w-v\|_0\|\phi\|_1,\ \ \ \forall w\in V,\ \forall v\in V,\ \forall\phi\in V.
\end{array}
\right.
\end{equation}

Now, we introduce the finite element method for semilinear elliptic
problem (\ref{weak_form}).
First we generate a shape regular decomposition of the computing
domain $\Omega\subset \cR^d \ (d=2, 3)$ into triangles or rectangles for
$d=2$, tetrahedrons or hexahedrons for $d=3$ (cf. \cite{BrennerScott, Ciarlet}).
The mesh diameter $h$ describes the maximum
diameter of all cells $K \in \cT_h$. Based on the mesh $\cT_h$,
we construct the finite element space $V_h \subset V$.  For simplicity, we set $V_h$ as the linear finite
 element space which is defined as follows
\begin{equation}\label{linear_fe_space}
  V_h = \big\{ v_h \in C(\Omega)\ \big|\ v_h|_{K} \in \mathcal{P}_1,
  \ \ \forall K \in \mathcal{T}_h\big\}\cap H^1_0(\Omega),
\end{equation}
where $\mathcal{P}_1$ denotes the linear function space.

The standard finite element scheme for semilinear equation (\ref{weak_form}) is:
Find $\bar{u}_h\in V_h$ such that
\begin{eqnarray}\label{FEM_form}
a(\bar{u}_h,v_h)+(f(x,\bar{u}_h),v_h)=(g,v_h), \quad \forall v_h\in V_h.
\end{eqnarray}

Denote a linearized operator $L:H_0^1(\Omega)\rightarrow H^{-1}(\Omega)$ by:
$$(Lw,v)= (\mathcal A\nabla w,\nabla v),\ \  \forall w\in V, \ \forall v\in V.$$
In order to deduce the global prior error estimates, we introduce $\eta_a(V_h)$ as follows:
\begin{eqnarray*}
\eta_{a}(V_h)=\sup_{f \in L^2(\Omega),\|f\|_0=1}\inf_{v_h \in V_h}\| L^{-1}f-v_h\|_a.
\end{eqnarray*}
It is easy to know that $\eta_a(V_h)\rightarrow 0$ as $h\rightarrow 0$
(cf. \cite{BrennerScott,Ciarlet}).

In order to measure the error for the finite element approximations, we denote
$$\delta_h(u)=\inf_{v_h\in V_h}\| u-v_h\|_a.$$
From \cite{shaidurov1995multigrid}, we can give the following error estimates.
\begin{lemma}\label{FEM_Estimate}
When Assumption A is satisfied, equations (\ref{weak_form})
and (\ref{FEM_form}) are uniquely solvable and the following estimates hold
\begin{eqnarray}
\|u-\bar u_h\|_a&\leq&(1+C\eta_a(V_h))\delta_h(u),\label{Super_Closes_Result}\\
\| u-\bar{u}_h\|_0&\lesssim&\eta_a(V_h)\| u-\bar{u}_h\|_a.\label{Direct_Estimate_0}
\end{eqnarray}
\end{lemma}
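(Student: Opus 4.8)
The plan is to establish the three claims in the order: unique solvability of \eqref{weak_form} and \eqref{FEM_form}, then the $L^2$ bound \eqref{Direct_Estimate_0}, and finally the energy bound \eqref{Super_Closes_Result}, because the sharp constant $1+C\eta_a(V_h)$ in the last one feeds on the second. For existence and uniqueness I would recast the problem as a monotone operator equation: define $A:V\to V^*$ by $\langle Au,v\rangle=a(u,v)+(f(x,u),v)$. Coercivity of $a(\cdot,\cdot)$ in \eqref{coercive} together with the first line of Assumption~A shows $A$ is strongly monotone, since $\langle Au-Av,u-v\rangle=a(u-v,u-v)+(f(x,u)-f(x,v),u-v)\ge c_a\|u-v\|_1^2$. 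The Lipschitz (second) line of Assumption~A gives continuity of the nonlinear part, hence hemicontinuity of $A$, and testing $\langle Au,u\rangle$ with the monotonicity relation applied to the pair $(u,0)$ yields $\langle Au,u\rangle\ge c_a\|u\|_1^2-C\|u\|_1$, i.e. coercivity. The Browder--Minty theorem then produces a unique $u\in V$; the identical argument restricted to the closed subspace $V_h$ gives a unique $\bar{u}_h\in V_h$.

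Set $e=u-\bar{u}_h$. Subtracting \eqref{FEM_form} from \eqref{weak_form} gives the Galerkin orthogonality $a(e,v_h)+(f(x,u)-f(x,\bar{u}_h),v_h)=0$ for all $v_h\in V_h$, which is the engine of both estimates. For \eqref{Direct_Estimate_0} I would run an Aubin--Nitsche duality, but against the \emph{linearized} adjoint rather than the bare operator $L$. Introducing the secant coefficient $b(x)\ge 0$ (nonnegative by monotonicity) with $f(x,u)-f(x,\bar{u}_h)=b\,e$, let $w\in V$ solve the well-posed dual problem $a(v,w)+(b\,w,v)=(e,v)$ for all $v\in V$; elliptic regularity on the convex domain $\Omega$ and the definition of $\eta_a(V_h)$ give $\inf_{w_h\in V_h}\|w-w_h\|_a\lesssim \eta_a(V_h)\|e\|_0$. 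Taking $v=e$ and using $(b\,w,e)=(b\,e,w)=(f(x,u)-f(x,\bar{u}_h),w)$,
\[
\|e\|_0^2=a(e,w)+(f(x,u)-f(x,\bar{u}_h),w),
\]
and subtracting the Galerkin orthogonality relation makes the nonlinear contribution cancel, leaving $\|e\|_0^2=a(e,w-w_h)+(f(x,u)-f(x,\bar{u}_h),w-w_h)$. Bounding the first term by Cauchy--Schwarz and the second by the Lipschitz line of Assumption~A gives $\|e\|_0^2\lesssim(\|e\|_a+C_f\|e\|_0)\,\eta_a(V_h)\|e\|_0$, so that for $h$ small enough the term $C_f\eta_a(V_h)\|e\|_0^2$ is absorbed on the left, yielding \eqref{Direct_Estimate_0}.

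For the energy estimate \eqref{Super_Closes_Result} I would combine monotonicity with the $L^2$ bound just obtained. Since $(f(x,u)-f(x,\bar{u}_h),e)\ge 0$, one has $\|e\|_a^2\le a(e,e)+(f(x,u)-f(x,\bar{u}_h),e)$, and choosing $\chi_h=\pi_h u-\bar{u}_h$ in the Galerkin orthogonality, where $\pi_h u$ is the best $\|\cdot\|_a$-approximation of $u$, rewrites the right-hand side as $a(e,u-\pi_h u)+(f(x,u)-f(x,\bar{u}_h),u-\pi_h u)$. The first piece is at most $\|e\|_a\delta_h(u)$; the second, by the Lipschitz line and equivalence of $\|\cdot\|_a$ with $\|\cdot\|_1$, is at most $C_f\|e\|_0\cdot C\delta_h(u)$, and inserting \eqref{Direct_Estimate_0} as $\|e\|_0\lesssim\eta_a(V_h)\|e\|_a$ bounds it by $C\eta_a(V_h)\|e\|_a\delta_h(u)$. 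Dividing through by $\|e\|_a$ gives $\|e\|_a\le(1+C\eta_a(V_h))\delta_h(u)$.

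The main obstacle is the $L^2$ estimate. A naive duality against $L^{-1}e$ leaves the stray term $(f(x,u)-f(x,\bar{u}_h),w)$, which the Lipschitz bound controls only by $C_f\|e\|_0^2$ and which therefore cannot be absorbed. Replacing $L$ by its linearization $L+b$ in the dual problem is the crucial device: it is exactly this choice that makes the nonlinear term cancel against the Galerkin orthogonality, after which only the approximation factor $\eta_a(V_h)$ and an absorbable higher-order term remain. The remaining technical point is to justify $\inf_{w_h}\|w-w_h\|_a\lesssim\eta_a(V_h)\|e\|_0$ for this linearized dual, which relies on $b\in L^\infty$ and on the regularity of $\Omega$ implicit in the definition of $\eta_a(V_h)$.
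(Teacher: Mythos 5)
Your overall architecture (monotonicity for well-posedness, Galerkin orthogonality, a duality argument for the $L^2$ bound, then the energy bound) is reasonable, and your derivation of \eqref{Super_Closes_Result} from \eqref{Direct_Estimate_0} is correct as far as it goes. But there is a genuine gap in the step on which everything else rests: the duality argument for \eqref{Direct_Estimate_0}. You introduce a ``secant coefficient'' $b(x)\ge 0$ with $f(x,u)-f(x,\bar u_h)=b\,e$ and solve the linearized dual problem $a(v,w)+(bw,v)=(e,v)$. Assumption~A does not supply such a $b$: the monotonicity is only the integrated inequality $(f(x,w)-f(x,v),w-v)\ge 0$, not pointwise sign control of a difference quotient, and the Lipschitz condition $(f(x,w)-f(x,v),\phi)\le C_f\|w-v\|_0\|\phi\|_1$ is an $L^2\to H^{-1}$ bound, not a pointwise one, so $b\in L^\infty$ is not available. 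You flag this yourself at the end, but it is not a ``remaining technical point'' --- it is exactly the hypothesis (a bounded derivative-like quantity) that the paper is advertised as avoiding. Moreover, even granting $b\in L^\infty$, the approximation estimate $\inf_{w_h}\|w-w_h\|_a\lesssim\eta_a(V_h)\|e\|_0$ does not follow from the definition of $\eta_a(V_h)$, which is formulated for $L^{-1}$, not for $(L+b)^{-1}$; you would need a separate $H^2$-regularity argument for the perturbed operator. Since your proof of \eqref{Super_Closes_Result} feeds on \eqref{Direct_Estimate_0}, the gap propagates to both estimates. (A smaller issue: your Browder--Minty coercivity step needs $(f(x,0),\cdot)$ to be a bounded functional on $V$, which Assumption~A, being phrased purely in terms of differences, does not literally provide.)

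The paper's proof sidesteps all of this with a different comparison element: instead of estimating $u-\bar u_h$ directly, it compares $\bar u_h$ with the Ritz projection $P_hu$ defined by $a(P_hw,v_h)=a(w,v_h)$. Testing with $w_h=P_hu-\bar u_h$, adding the nonnegative term $(f(x,P_hu)-f(x,\bar u_h),w_h)$ by monotonicity, and using both equations gives
\begin{equation*}
a(P_hu-\bar u_h,w_h)\le (f(x,P_hu)-f(x,u),w_h)\le C_f\|u-P_hu\|_0\|w_h\|_a,
\end{equation*}
hence $\|P_hu-\bar u_h\|_a\le C_f\|u-P_hu\|_0\le C_f\eta_a(V_h)\|u-P_hu\|_a$. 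The only duality used is the standard linear Aubin--Nitsche estimate for $P_h$, which is exactly what $\eta_a(V_h)$ encodes; the nonlinear term never has to be linearized. Both \eqref{Super_Closes_Result} and \eqref{Direct_Estimate_0} then follow from the triangle inequality, the $L^2$ bound for the piece $P_hu-\bar u_h$ being the trivial $\|\cdot\|_0\le C\|\cdot\|_a$ since that piece already carries the factor $\eta_a(V_h)$. If you replace your duality step with this comparison, the rest of your write-up survives.
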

\begin{proof}
From Theorem 6.1 in \cite{shaidurov1995multigrid}, we can know that
problems (\ref{weak_form}) and (\ref{FEM_form})
are uniquely solvable. Now, it is time to prove the error estimates.
For this aim, we define the finite element projection operator $P_h$ by the following equation
\begin{eqnarray*}
a(P_hw,v_h)=a(w,v_h),\ \ \ \ \forall w\in V, \ \forall v_h\in V_h.
\end{eqnarray*}
It is easy to know that $\|u-P_hu\|_a=\delta_h(u)$ and $\|u-P_hu\|_0\lesssim \eta_a(V_h)\|u-P_hu\|_a$.
Let us define $w_h=P_hu-\bar u_h$ in this proof.
From (\ref{weak_form}), (\ref{Assume Nonlinear}) and (\ref{FEM_form}), we have
\begin{eqnarray*}
a(P_hu-\bar u_h,w_h)&\leq&a(P_hu-\bar u_h,w_h) + (f(x,P_hu)-f(x,\bar u_h),w_h)\nonumber\\
&=&a(P_hu,w_h)+(f(x,P_hu),w_h)-(g,w_h)\nonumber\\
&=&a(P_hu-u,w_h)+(f(x,P_hu)-f(x,u),w_h)\nonumber\\
&=&(f(x,P_hu)-f(x,u),w_h)\nonumber\\
&\leq& C_f\|u- P_hu\|_0\|w_h\|_a.
\end{eqnarray*}
Then the following inequalities hold
\begin{eqnarray}\label{Inequality_1}
\|P_hu-\bar u_h\|_a\leq C_f\|u-P_hu\|_0\leq C_f\eta_a(V_h)\|u-P_hu\|_a.
\end{eqnarray}
Combining (\ref{Inequality_1}) and the triangle inequality leads to the following estimates
\begin{eqnarray}
\|u-\bar u_h\|_a &\leq& \|u-P_hu\|_a+\|P_hu-\bar u_h\|_a\nonumber\\
&\leq& \delta_h(u)+C_f\eta_a(V_h)\|u-P_hu\|_a\nonumber\\
&\leq& (1+C_f\eta_a(V_h))\delta_h(u),
\end{eqnarray}
which is the desired result (\ref{Super_Closes_Result}).
From (\ref{Inequality_1}) and the triangle inequality, we have
\begin{eqnarray*}
\|u-\bar u_h\|_0&\leq& \|u-P_hu\|_0+\|P_hu-\bar u_h\|_0
\leq\|u-P_hu\|_0+ C\|P_hu-\bar u_h\|_a\nonumber\\
&\leq&C\eta_a(V_h)\|u-P_hu\|_a+C_f\eta_a(V_h)\|u-P_hu\|_a\nonumber\\
&\leq&(C+C_f)\eta_a(V_h)\|u-P_hu\|_a\leq (C+C_f)\eta_a(V_h)\|u-\bar u_h\|_a.
\end{eqnarray*}
This is the desired result (\ref{Direct_Estimate_0}) and the proof is complete.
\end{proof}


\section{Full multigrid method for semilinear elliptic equation}
In this section, a full multigrid method for
semilinear problems is proposed based on multilevel correction scheme in \cite{Xie_IMA,Xie_JCP}.
The key point is to transform the solution of the semilinear problem into a series of
solutions of the corresponding linear boundary value problems on the sequence of finite element
spaces and semilinear  problems on a very low dimensional space.
In order to carry out the multigrid method,
we first generate a coarse mesh $\mathcal{T}_H$
with the mesh size $H$ and the linear finite element space $V_H$ is
defined on the mesh $\mathcal{T}_H$. Then a sequence of
triangulations $\mathcal{T}_{h_k}$
of $\Omega\subset \mathcal{R}^d$ is determined as follows.
Suppose $\mathcal{T}_{h_1}$ (produced from $\mathcal{T}_H$ by
regular refinements) is given and let $\mathcal{T}_{h_k}$ be obtained
from $\mathcal{T}_{h_{k-1}}$ via one regular refinement step
(produce $\beta^d$ subelements) such that
\begin{eqnarray}\label{mesh_size_recur}
h_k=\frac{1}{\beta}h_{k-1},\ \ \ \ k=2,\cdots,n,
\end{eqnarray}
where the positive number $\beta$ denotes the refinement index and
larger than $1$ (always equals $2$).
Based on this sequence of meshes, we construct the corresponding
nested linear finite element spaces such that
\begin{eqnarray}\label{FEM_Space_Series}
V_{H}\subseteq V_{h_1}\subset V_{h_2}\subset\cdots\subset V_{h_n}.
\end{eqnarray}
Due to the convexity of the domain $\Omega$, the sequence of finite element spaces
$V_{h_1}\subset V_{h_2}\subset\cdots\subset V_{h_n}$
and the finite element space $V_H$ have  the following relations
of approximation accuracy
\begin{eqnarray}\label{delta_recur_relation}
\eta_a(V_{h_k})\approx \frac{1}{\beta}\eta_a(V_{h_{k-1}}),\ \ \ \
\delta_{h_k}(u)\approx \frac{1}{\beta}\delta_{h_{k-1}}(u),\ \ \ k=2,\cdots,n.
\end{eqnarray}
\subsection{One correction step}
In order to design the full multigrid method, first we introduce one correction step in this subsection.

Assume we have obtained an approximate solution $u_{h_k}^{(\ell)}\in V_{h_k}$.
A correction step to improve the accuracy of the
given approximation $u_{h_k}^{(\ell)}$ is designed as follows.
\begin{algorithm}\label{one step}
One Correction Step
\begin{enumerate}
\item Define the following auxiliary boundary value problem:
Find $\widehat{u}_{h_k}^{(\ell+1)} \in V_{h_k}$
such that
\begin{equation}\label{linearized}
a(\widehat{u}_{h_k}^{(\ell+1)},v_{h_k})
=-(f(x,u_{h_k}^{(\ell)}),v_{h_k})+(g,v_{h_k}), \quad \forall v_{h_k}\in V_{h_k}.
\end{equation}
Perform $m$ multigrid iteration steps for the second order elliptic equation to obtain an approximate
solution $\widetilde{u}_{h_k}^{(\ell+1)}$ with the following error reduction rate
\begin{eqnarray}\label{Error_Reduction}
\|\widetilde{u}_{h_k}^{(\ell+1)}-\widehat{u}_{h_k}^{(\ell+1)}\|_a
\leq \theta \|u_{h_k}^{(\ell)}-\widehat u_{h_k}^{(\ell+1)}\|_a,
\end{eqnarray}
where $u_{h_k}^{(\ell)}$ is used as the initial value for the multigrid iteration and
$\theta<1$ is a fixed constant independent from the mesh size $h_k$.
\item Define a finite element space $V_{H,h_k}:=V_H+{\rm span}\{\widetilde{u}_{h_k}^{\ell+1}\}$
and solve the following semilinear elliptic equation: Find $u_{h_k}^{(\ell+1)}\in V_{H,h_k}$ such that
\begin{equation}\label{coarse problem}
a(u_{h_k}^{(\ell+1)},v_{H,h_k})+(f(x,u_{h_k}^{(\ell+1)}),v_{H,h_k})=(g,v_{H,h_k}), \ \forall v_{H,h_k}\in V_{H,h_k}.
\end{equation}
\end{enumerate}
In order to simplify the notation and summarize the above two steps, we define
\begin{eqnarray*}
u_{h_k}^{(\ell+1)}={\rm SemilinearMG}(V_H, u_{h_k}^{(\ell)},V_{h_k}).
\end{eqnarray*}
\end{algorithm}
The error estimate of Algorithm \ref{one step} is studied in the following theorem.
\begin{theorem}\label{one step thm}
Assume the given solution $u_{h_k}^{(\ell)}$ has the following estimate
\begin{eqnarray}
\|\bar{u}_{h_k}-u_{h_k}^{(\ell)}\|_0\lesssim \eta_{a}(V_H)\|\bar{u}_{h_k}-u_{h_k}^{(\ell)}\|_a.
\end{eqnarray}
After the one correction step defined by Algorithm \ref{one step},
the resultant approximate solution $u_{h_k}^{(\ell+1)}$ has the following estimates
\begin{eqnarray}
\| \bar{u}_{h_k}-u_{h_k}^{(\ell+1)}\|_a&\leq& \gamma \| \bar{u}_{h_k}-u_{h_k}^{(\ell)}\|_a,\label{Error_a_k_ell+1}\\
\| \bar{u}_{h_k}-u_{h_k}^{(\ell+1)} \|_0&\leq& C\eta_a(V_H)\| \bar{u}_{h_k}-u_{h_k}^{(\ell+1)}\|_a,\label{Error_0_k_ell+1}
\end{eqnarray}
where $$ \gamma:= \big(\theta+(1+\theta)C\eta_a(V_H)\big)\big(1+C\eta_a(V_H)\big).$$
\end{theorem}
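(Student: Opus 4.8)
The plan is to prove (\ref{Error_a_k_ell+1}) in two stages that each mirror the argument of Lemma~\ref{FEM_Estimate}: first control the error produced by the linearized solve together with the $m$ multigrid sweeps, which will yield the factor $\theta+(1+\theta)C\eta_a(V_H)$, and then control the gain coming from the low-dimensional semilinear solve on $V_{H,h_k}$, which will yield the factor $1+C\eta_a(V_H)$.

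First I would analyze the exact linearized solution $\widehat{u}_{h_k}^{(\ell+1)}$ of (\ref{linearized}). Subtracting (\ref{linearized}) from the finite element equation (\ref{FEM_form}) on $V_{h_k}$ satisfied by $\bar{u}_{h_k}$ gives, for all $v_{h_k}\in V_{h_k}$,
\[a(\widehat{u}_{h_k}^{(\ell+1)}-\bar{u}_{h_k},v_{h_k})=-(f(x,u_{h_k}^{(\ell)})-f(x,\bar{u}_{h_k}),v_{h_k}).\]
Testing with $v_{h_k}=\widehat{u}_{h_k}^{(\ell+1)}-\bar{u}_{h_k}$ and using the Lipschitz bound in Assumption~A together with the coercivity (\ref{coercive}) yields $\|\widehat{u}_{h_k}^{(\ell+1)}-\bar{u}_{h_k}\|_a\le C\|u_{h_k}^{(\ell)}-\bar{u}_{h_k}\|_0$; the hypothesis $\|\bar{u}_{h_k}-u_{h_k}^{(\ell)}\|_0\lesssim\eta_a(V_H)\|\bar{u}_{h_k}-u_{h_k}^{(\ell)}\|_a$ then turns this into $\|\widehat{u}_{h_k}^{(\ell+1)}-\bar{u}_{h_k}\|_a\le C\eta_a(V_H)\|\bar{u}_{h_k}-u_{h_k}^{(\ell)}\|_a$, i.e. the exact linearized solve is already a factor $\eta_a(V_H)$ closer to $\bar{u}_{h_k}$. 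Feeding this into the error-reduction hypothesis (\ref{Error_Reduction}) after bounding $\|u_{h_k}^{(\ell)}-\widehat{u}_{h_k}^{(\ell+1)}\|_a\le(1+C\eta_a(V_H))\|\bar{u}_{h_k}-u_{h_k}^{(\ell)}\|_a$ by the triangle inequality, a second triangle inequality gives $\|\widetilde{u}_{h_k}^{(\ell+1)}-\bar{u}_{h_k}\|_a\le\big(\theta+(1+\theta)C\eta_a(V_H)\big)\|\bar{u}_{h_k}-u_{h_k}^{(\ell)}\|_a$.

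For the second stage I would treat $u_{h_k}^{(\ell+1)}$ as a Galerkin solution of the semilinear problem on $V_{H,h_k}$, with $\bar{u}_{h_k}$ playing the role of the ``exact'' solution: because $V_{H,h_k}\subset V_{h_k}$, the equation (\ref{FEM_form}) for $\bar{u}_{h_k}$ restricts to $a(\bar{u}_{h_k},v)+(f(x,\bar{u}_{h_k}),v)=(g,v)$ for all $v\in V_{H,h_k}$. Introducing the $a$-projection $P_{H,h_k}$ onto $V_{H,h_k}$ and repeating the estimate in the proof of Lemma~\ref{FEM_Estimate} on this pair gives
\[\|P_{H,h_k}\bar{u}_{h_k}-u_{h_k}^{(\ell+1)}\|_a\le C\|P_{H,h_k}\bar{u}_{h_k}-\bar{u}_{h_k}\|_0\le C\eta_a(V_{H,h_k})\|\bar{u}_{h_k}-P_{H,h_k}\bar{u}_{h_k}\|_a.\]
The two observations that make the scheme work are $\eta_a(V_{H,h_k})\le\eta_a(V_H)$ (adding functions to $V_H$ only shrinks the inner infimum in the definition of $\eta_a$) and that the best approximation $\|\bar{u}_{h_k}-P_{H,h_k}\bar{u}_{h_k}\|_a=\inf_{v\in V_{H,h_k}}\|\bar{u}_{h_k}-v\|_a$ is $\le\|\bar{u}_{h_k}-\widetilde{u}_{h_k}^{(\ell+1)}\|_a$ because $\widetilde{u}_{h_k}^{(\ell+1)}\in V_{H,h_k}$. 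A final triangle inequality produces $\|\bar{u}_{h_k}-u_{h_k}^{(\ell+1)}\|_a\le(1+C\eta_a(V_H))\|\bar{u}_{h_k}-\widetilde{u}_{h_k}^{(\ell+1)}\|_a$, and multiplying the two stages gives (\ref{Error_a_k_ell+1}). The $L^2$ bound (\ref{Error_0_k_ell+1}) then follows exactly as in Lemma~\ref{FEM_Estimate}: split $\bar{u}_{h_k}-u_{h_k}^{(\ell+1)}$ through $P_{H,h_k}\bar{u}_{h_k}$, estimate the projection part by $\eta_a(V_H)$ in $L^2$ and the finite-dimensional remainder by $\|\cdot\|_0\le C\|\cdot\|_a$, and use that $P_{H,h_k}\bar{u}_{h_k}$ is the $a$-best approximation to replace $\|\bar{u}_{h_k}-P_{H,h_k}\bar{u}_{h_k}\|_a$ by $\|\bar{u}_{h_k}-u_{h_k}^{(\ell+1)}\|_a$.

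The main obstacle is the second stage: one must recognize that the single enrichment direction $\widetilde{u}_{h_k}^{(\ell+1)}$ already makes the best-approximation error in $V_{H,h_k}$ as small as $\|\bar{u}_{h_k}-\widetilde{u}_{h_k}^{(\ell+1)}\|_a$, while the Lipschitz/stability constant multiplying it involves only the coarse quantity $\eta_a(V_H)$ rather than $\eta_a(V_{h_k})$. This decoupling — a good approximation direction coming from the fine mesh combined with a stability estimate governed solely by the coarse space — is exactly what forces $\gamma$ to contain the small factor $\eta_a(V_H)$, and hence to be less than $1$ once $H$ is sufficiently small.
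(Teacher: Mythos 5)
Your proposal is correct and follows essentially the same route as the paper's own proof: the same two-stage decomposition (exact linearized solve plus multigrid reduction giving the factor $\theta+(1+\theta)C\eta_a(V_H)$, then the Galerkin argument on $V_{H,h_k}$ via the projection $P_{H,h_k}$ giving the factor $1+C\eta_a(V_H)$), with the same key observations that $\eta_a(V_{H,h_k})\leq\eta_a(V_H)$ and that $\widetilde{u}_{h_k}^{(\ell+1)}\in V_{H,h_k}$ bounds the best-approximation error. The $L^2$ estimate is also handled exactly as in the paper, by splitting through $P_{H,h_k}\bar{u}_{h_k}$ and using the duality bound on the projection part.
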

\begin{proof}
From (\ref{Assume Nonlinear}), (\ref{FEM_form}) and (\ref{linearized}), we have
\begin{eqnarray}\label{one step estimate}
&&a(\bar u_{h_k}-\widehat{u}_{h_k}^{(\ell+1)},v_{h_k})
=(f(x,u_{h_k}^{(\ell)})-f(x,\bar u_{h_k}),v_{h_k})\nonumber\\
&&\leq C_f\|\bar{u}_{h_k}-u_{h_k}^{(\ell)}\|_0\|v_{h_k}\|_a
\leq C\eta_a(V_H)\|\bar u_{h_k}-u_{h_k}^{(\ell)}\|_a\|v_{h_k}\|_a,\ \forall v_{h_k}\in V_{h_k}.
\end{eqnarray}
Combing (\ref{coercive}) and (\ref{one step estimate}) leads to
\begin{eqnarray}\label{Inequality_2}
\| \bar u_{h_k}-\widehat{u}_{h_k}^{(\ell+1)} \|_a \leq C\eta_a(V_H)\|\bar u_{h_k}-u_{h_k}^{(\ell)}\|_a.
\end{eqnarray}
After performing $m$ multigrid iteration steps, from (\ref{Error_Reduction}) and (\ref{Inequality_2}),
the following estimates hold
\begin{eqnarray}
\|\widetilde u_{h_k}^{(\ell+1)}-\bar u_{h_k}\|_a
&\leq& \|\widetilde u_{h_k}^{(\ell+1)}-\widehat u_{h_k}^{(\ell+1)}\|_a
+\|\widehat u_{h_k}^{(\ell+1)}-\bar u_{h_k}\|_a\nonumber\\
&\leq& \theta\|u_{h_k}^{(\ell)}-\widehat u_{h_k}^{(\ell+1)}\|_a
+\|\widehat u_{h_k}^{(\ell+1)}-\bar u_{h_k}\|_a\nonumber\\
&\leq& \theta\|u_{h_k}^{(\ell)}-\bar u_{h_k}\|_a+\theta\|\widehat u_{h_k}^{(\ell+1)}-\bar u_{h_k}\|_a
+\|\widehat u_{h_k}^{(\ell+1)}-\bar u_{h_k}\|_a\nonumber\\
&\leq& \big(\theta+(1+\theta)C\eta_a(V_H)\big)\|\bar u_{h_k}-u_{h_k}^{(\ell)}\|_a.
\end{eqnarray}
Note that the semilinear elliptic problem (\ref{coarse problem}) can be regarded as a finite dimensional
approximation of the semilinear elliptic problem (\ref{FEM_form}).
Let $P_{H,h_k}: V\rightarrow V_{H,h_k}$ denotes the finite element projection operator which is defined as follows
\begin{eqnarray*}
a(P_{H,h_k}w,v_{H,h_k})&=&a(w,v_{H,h_k}),\ \ \ \ \forall w\in V,\ \forall v_{H,h_k}\in V_{H,h_k}.
\end{eqnarray*}
Since $\widetilde u_{h_k}^{(\ell+1)}\in V_{H,h_k}$ and $V_H\subset V_{H,h_k}$, it is obvious that
$\eta_a(V_{H,h_k})\leq \eta_a(V_H)$ and
\begin{eqnarray}
\|\bar u_{h_k}-P_{H,h_k}\bar u_{h_k}\|_a
&=& \inf_{v_{H,h_k}\in V_{H,h_k}}\|\bar u_{h_k}-v_{H,h_k}\|_a \nonumber\\
&\leq& \|\bar u_{h_k}-\widetilde u_{h_k}^{(\ell+1)}\|_a,\label{Inequality_4}\\
\|\bar u_{h_k}-P_{H,h_k}\bar u_{h_k}\|_0
&\leq& C\eta_a(V_{H,h_k})\|\bar u_{h_k}-P_{H,h_k}\bar u_{h_k}\|_a\nonumber\\
&\leq& C\eta_a(V_H)\|\bar u_{h_k}-P_{H,h_k}\bar u_{h_k}\|_a.\label{Inequality_5}
\end{eqnarray}
Let us define $w_{h_k}=P_{H,h_k}\bar u_{h_k}- u_{h_k}^{(\ell+1)}\in V_{H,h_k}$ in this proof.
Based on problems (\ref{FEM_form}) and (\ref{coarse problem}),  the following estimates hold
\begin{eqnarray}\label{Inequality_3}
&&a(P_{H,h_k}\bar u_{h_k}- u_{h_k}^{(\ell+1)},w_{h_k})\nonumber\\
&\leq&a(P_{H,h_k}\bar u_{h_k}- u_{h_k}^{(\ell+1)},w_{h_k})
+ (f(x,P_{H,h_k}\bar u_{h_k})-f(x,u_{h_k}^{(\ell+1)}),w_{h_k})\nonumber\\
&=&a(P_{H,h_k}\bar u_{h_k},w_h)+(f(x,P_{H,h_k}\bar u_{h_k}),w_{h_k})-(g,w_{h_k})\nonumber\\
&=&a(P_{H,h_k}\bar u_{h_k}-\bar u_{h_k},w_{h_k})+(f(x,P_{H,h_k}\bar u_{h_k})-f(x,\bar u_{h_k}),w_{h_k})\nonumber\\
&=&(f(x,P_{H,h_k}\bar u_{h_k})-f(x,\bar u_{h_k}),w_{h_k})
\leq C_f\|\bar u_{h_k}- P_{H,h_k}\bar u_{h_k}\|_0\|w_{h_k}\|_a.
\end{eqnarray}
From (\ref{Inequality_5}) and (\ref{Inequality_3}), we have
\begin{eqnarray}\label{Inequality_6}
\|P_{H,h_k}\bar u_{h_k}- u_{h_k}^{(\ell+1)}\|_a &\leq& C_f\|\bar u_{h_k}- P_{H,h_k}\bar u_{h_k}\|_0\nonumber\\
&\leq&C\eta_a(V_H)\|\bar u_{h_k}- P_{H,h_k}\bar u_{h_k}\|_a.
\end{eqnarray}
Combining (\ref{Inequality_4}), (\ref{Inequality_6}) and triangle inequality leads to the following inequalities
\begin{eqnarray}
\|\bar u_{h_k}-u_{h_k}^{(\ell+1)}\|_a&\leq& \|\bar u_{h_k}-P_{H,h_k}\bar u_{h_k}\|_a+\|P_{H,h_k}\bar u_{h_k}-u_{h_k}^{(\ell+1)}\|_a\nonumber\\
&\leq&(1+C\eta_a(V_H))\|\bar u_{h_k}-P_{H,h_k}\bar u_{h_k}\|_a\nonumber\\
&\leq&(1+C\eta_a(V_H))\|\bar u_{h_k}-\widetilde u_{h_k}^{(\ell+1)}\|_a.
\end{eqnarray}
This is the desired result (\ref{Error_a_k_ell+1}).
From (\ref{Inequality_3}) and the triangle inequality, we have the following estimates
\begin{eqnarray}
\|\bar u_{h_k}-u_{h_k}^{(\ell+1)}\|_0&\leq& \|\bar u_{h_k}-P_{H,h_k}\bar u_{h_k}\|_0+\|P_{H,h_k}\bar u_{h_k}-u_{h_k}^{(\ell+1)}\|_0\nonumber\\
&\leq& \|\bar u_{h_k}-P_{H,h_k}\bar u_{h_k}\|_0+C\|P_{H,h_k}\bar u_{h_k}-u_{h_k}^{(\ell+1)}\|_a\nonumber\\
&\leq&C\eta_a(V_H)\|\bar u_{h_k}-P_{H,h_k}\bar u_{h_k}\|_a\nonumber\\
&\leq&C\eta_a(V_H)\|\bar u_{h_k}-u_{h_k}^{(\ell+1)}\|_a,
\end{eqnarray}
which is the desired result (\ref{Error_0_k_ell+1}) and the proof is complete.
\end{proof}
\begin{remark}\label{Remark_1}
The proof of Theorem \ref{one step thm} shows that the structure of the low dimensional
space $V_{H,h_k}$ plays the key role for Algorithm \ref{one step}. This special space makes the
finite element projection $P_{H,h_k}$ has both the accuracy as in (\ref{Inequality_4}) and
the $L^2$-norm estimate by duality argument as in (\ref{Inequality_5}).
\end{remark}
\subsection{Full multigrid method}
In this subsection, a full multigrid method is proposed based on the one correction step
defined in Algorithm \ref{one step}. This algorithm can reach the optimal convergence rate with
the optimal computational complexity.
\begin{algorithm}\label{Multilevel Correction}
Full Multigrid Scheme
\begin{enumerate}
\item Solve the following semilinear problem in $V_{h_1}$:
Find $u_{h_1}\in V_{h_1}$ such that
\begin{equation*}
a(u_{h_1}, v_{h_1})+(f(x,u_{h_1}), v_{h_1})=(g,v_{h_1}), \quad \forall v_{h_1}\in  V_{h_1}.
\end{equation*}
\item For $k=2,\cdots,n$, do the following iteration:
\begin{enumerate}
\item Set $u_{h_k}^{(0)}=u_{h_{k-1}}$.
\item For $\ell=0,\cdots, p-1$, do the following iterations
\begin{eqnarray*}
u_{h_k}^{(\ell+1)}={\rm SemilinearMG}(V_H,u_{h_k}^{(\ell)},V_{h_k}).
\end{eqnarray*}
\item Define $u_{h_k}=u_{h_k}^{(p)}$.
\end{enumerate}
End Do
\end{enumerate}
Finally, we obtain an approximate solution $u_{h_n}\in V_{h_n}$.
\end{algorithm}
\begin{theorem}\label{Multilevel Correction Thm}
After implementing Algorithm \ref{Multilevel Correction}, we have the following error estimates
for the final approximation $u_{h_n}$
\begin{eqnarray}
\|\bar{u}_{h_n}-u_{h_n}\|_a &\leq&\frac{2\gamma^p\beta}{1-\gamma^p\beta}\delta_{h_n}(u),\label{Estimate1}\\
\|\bar{u}_{h_n}-u_{h_n}\|_0 &\leq& C\eta_a(V_H)\|\bar{u}_{h_n}-u_{h_n}\|_a, \label{Estimate2}
\end{eqnarray}
under the condition that the coarsest mesh size $H$ is small enough such that $\gamma^p\beta<1$.
\end{theorem}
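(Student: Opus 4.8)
The plan is to prove the final theorem by induction on the mesh level $k$, using Theorem \ref{one step thm} as the workhorse for a single correction and combining it with the approximation-accuracy relations (\ref{delta_recur_relation}) across levels. The central quantity to control is $\|\bar u_{h_k}-u_{h_k}\|_a$ at each level, where $u_{h_k}=u_{h_k}^{(p)}$ is the output after $p$ correction steps. Before invoking Theorem \ref{one step thm} repeatedly, I must verify its hypothesis holds at every correction step, namely that the current iterate $u_{h_k}^{(\ell)}$ satisfies the $L^2$-versus-energy bound $\|\bar u_{h_k}-u_{h_k}^{(\ell)}\|_0\lesssim \eta_a(V_H)\|\bar u_{h_k}-u_{h_k}^{(\ell)}\|_a$. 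For $\ell\geq 1$ this is exactly the conclusion (\ref{Error_0_k_ell+1}) of the previous correction step, so it propagates automatically; the only genuinely separate point is the base case $\ell=0$, i.e.\ the initial guess $u_{h_k}^{(0)}=u_{h_{k-1}}$ inherited from the coarser level.

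First I would apply Theorem \ref{one step thm} $p$ times within a fixed level $k$. Since each step contracts the energy error by the factor $\gamma$ and the $L^2$ hypothesis is regenerated after the first step, iterating gives
\begin{eqnarray*}
\|\bar u_{h_k}-u_{h_k}^{(p)}\|_a\leq \gamma^p\|\bar u_{h_k}-u_{h_k}^{(0)}\|_a=\gamma^p\|\bar u_{h_k}-u_{h_{k-1}}\|_a.
\end{eqnarray*}
Next I would relate $\bar u_{h_k}$ and $\bar u_{h_{k-1}}$. Using the triangle inequality through the exact solution $u$ and Lemma \ref{FEM_Estimate} (specifically (\ref{Super_Closes_Result})) on both levels, together with the recurrence $\delta_{h_k}(u)\approx\frac1\beta\delta_{h_{k-1}}(u)$ from (\ref{delta_recur_relation}), I can bound $\|\bar u_{h_k}-u_{h_{k-1}}\|_a$ in terms of $\|\bar u_{h_{k-1}}-u_{h_{k-1}}\|_a$ plus a multiple of $\delta_{h_{k-1}}(u)$. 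Writing $e_k:=\|\bar u_{h_k}-u_{h_k}\|_a$, this yields a recursion of the schematic form $e_k\leq \gamma^p\beta\,(e_{k-1}+C\delta_{h_k}(u))$, where the factor $\beta$ enters precisely because $\delta_{h_{k-1}}(u)\approx\beta\,\delta_{h_k}(u)$.

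I would then solve this linear recurrence by unrolling it from $k=n$ down to the base level, where the level-one problem is solved exactly so $e_1=0$ (or is absorbed harmlessly). Summing the geometric series in $\gamma^p\beta$, and invoking the hypothesis $\gamma^p\beta<1$ guaranteed by taking the coarsest mesh $H$ small enough (which forces $\gamma$ close to $\theta\cdot 1<1$ via the definition of $\gamma$), produces the closed-form bound
\begin{eqnarray*}
e_n\leq \frac{2\gamma^p\beta}{1-\gamma^p\beta}\,\delta_{h_n}(u),
\end{eqnarray*}
which is exactly (\ref{Estimate1}). Finally, (\ref{Estimate2}) is immediate: it is just the $L^2$ estimate (\ref{Error_0_k_ell+1}) of Theorem \ref{one step thm} specialized to the last correction step at level $n$.

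The main obstacle I anticipate is twofold. First, establishing the $L^2$ hypothesis of Theorem \ref{one step thm} for the \emph{base case} $u_{h_k}^{(0)}=u_{h_{k-1}}$ requires care: $u_{h_{k-1}}$ lives in $V_{h_{k-1}}$ and approximates $\bar u_{h_{k-1}}$, not $\bar u_{h_k}$, so I must pass through the duality/Aubin--Nitsche estimates on the coarse space and use that $\eta_a(V_{h_{k-1}})\lesssim\eta_a(V_H)$ (since $V_H\subseteq V_{h_{k-1}}$) to certify the required $L^2$-to-energy ratio. Second, the bookkeeping of constants in the recurrence must be arranged so that the same absolute constant $C$ and the same contraction factor $\gamma$ work uniformly across all levels $k=2,\dots,n$; this uniformity is what makes $\gamma$ independent of the level and is the reason the geometric summation closes. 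Once the $\gamma^p\beta<1$ smallness condition is in force, the summation itself is routine.
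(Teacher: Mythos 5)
Your proposal matches the paper's proof in all essentials: contract by $\gamma^p$ within each level via Theorem \ref{one step thm}, pass between consecutive discrete solutions through the exact solution $u$ using Lemma \ref{FEM_Estimate} and the relation $\delta_{h_{k-1}}(u)\approx\beta\,\delta_{h_k}(u)$, and sum the resulting geometric series in $\gamma^p\beta$; the $L^2$ bound is likewise read off from the last correction step. The only nitpick is that your schematic recurrence should read $e_k\leq\gamma^p e_{k-1}+C\gamma^p\beta\,\delta_{h_k}(u)$ (the $\beta$ should not multiply $e_{k-1}$), but this does not change the unrolled sum or the final constant, and your attention to verifying the $L^2$ hypothesis for the transferred initial guess $u_{h_k}^{(0)}=u_{h_{k-1}}$ is actually more careful than the paper itself.
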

\begin{proof}
From the first step of Algorithm \ref{Multilevel Correction}, we have $u_{h_1}=\overline{u}_{h_1}$.
Then from Lemma \ref{FEM_Estimate} and the proof of Theorem \ref{one step}, the following estimates hold
\begin{eqnarray}
\|\bar u_{h_2}-u_{h_2}\|_a&=&\|\bar u_{h_2}-u_{h_2}^{(p)}\|_a\leq \gamma^p\|\bar u_{h_2}-u_{h_2}^{(0)}\|_a\nonumber\\
&=&\gamma^p\|\bar u_{h_2}- u_{h_1}\|_a=\gamma^p\|\bar u_{h_2}- \bar u_{h_1}\|_a.\label{tmp0}\\
\|\overline{u}_{h_2}-u_{h_2}\|_0&\leq& C\eta_a(V_H)\|\bar u_{h_2}-u_{h_2}\|_a.\label{tmp1}
\end{eqnarray}
Based on (\ref{tmp0}), (\ref{tmp1}), Theorem \ref{one step} and recursive argument,
the final approximate solution has the following error estimates
\begin{eqnarray*}
\|\bar u_{h_n}-u_{h_n}\|_a &\leq& \gamma^p\|\bar u_{h_n}-u_{h_n}^{(0)}\|_a=\gamma^p\|\bar u_{h_n}-u_{h_{n-1}}\|_a\nonumber\\
&\leq& \gamma^p\big(\|\bar u_{h_n}-\bar u_{h_{n-1}}\|_a + \|\bar u_{h_{n-1}}-u_{h_{n-1}}\|_a\big)\nonumber\\
&\leq& \gamma^p\|\bar u_{h_n}-\bar u_{h_{n-1}}\|_a + \gamma^{2p}\big(\|\bar u_{h_{n-1}}-\bar u_{h_{n-2}}\|_a
+\|\bar u_{h_{n-2}}- u_{h_{n-2}}\|_a\big)\nonumber\\
&\leq&\sum_{k=1}^{n-1}\gamma^{kp}\|\bar u_{h_{n-k+1}}-\bar u_{h_{n-k}}\|_a\nonumber\\
&\leq&\sum_{k=1}^{n-1}\gamma^{kp}\big(\|\bar u_{h_{n-k+1}}-u\|_a+\|u-\bar u_{h_{n-k}}\|_a\big)\nonumber\\
&\leq&2\sum_{k=1}^{n-1}\gamma^{kp}\delta_{h_{n-k}}(u)\leq 2\sum_{k=1}^{n-1}\gamma^{kp}\beta^k\delta_{h_n}(u)
\leq\frac{2\gamma^p\beta}{1-\gamma^p\beta}\delta_{h_n}(u),
\end{eqnarray*}
which is just the desired result (\ref{Estimate1}).
The second result (\ref{Estimate2}) can be proved by the similar argument in the proof
of Theorem \ref{one step thm} and the proof is complete.
\end{proof}
\begin{corollary}\label{corollary}
For the final approximation $u_{h_n}$ obtained by Algorithm \ref{Multilevel Correction}, we have the following estimates
\begin{eqnarray}
\|u-u_{h_n}\|_a&\lesssim& \delta_{h_n}(u),\\
\|u-u_{h_n}\|_0&\lesssim& \eta_a(V_{H})\delta_{h_n}(u).
\end{eqnarray}
\end{corollary}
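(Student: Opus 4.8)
The plan is to obtain both bounds by splitting the total error $u - u_{h_n}$ into a \emph{discretization} part $u - \bar u_{h_n}$ and an \emph{iteration} part $\bar u_{h_n} - u_{h_n}$ via the triangle inequality, and then to control the two parts separately: the discretization part by Lemma \ref{FEM_Estimate}, and the iteration part by Theorem \ref{Multilevel Correction Thm}. No new estimate is needed; the only genuinely new observation required is the monotonicity of $\eta_a$ with respect to the finite element space.

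For the energy-norm bound I would write $\|u-u_{h_n}\|_a\le\|u-\bar u_{h_n}\|_a+\|\bar u_{h_n}-u_{h_n}\|_a$. The first term is handled by (\ref{Super_Closes_Result}), which gives $\|u-\bar u_{h_n}\|_a\le(1+C\eta_a(V_{h_n}))\delta_{h_n}(u)$; since $\eta_a(V_{h_n})\to 0$ it is uniformly bounded, so this term is $\lesssim\delta_{h_n}(u)$. The second term is exactly (\ref{Estimate1}), and under the standing hypothesis $\gamma^p\beta<1$ the factor $\tfrac{2\gamma^p\beta}{1-\gamma^p\beta}$ is a fixed constant, so this term is also $\lesssim\delta_{h_n}(u)$. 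Adding the two contributions yields $\|u-u_{h_n}\|_a\lesssim\delta_{h_n}(u)$.

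For the $L^2$ bound I would again split $\|u-u_{h_n}\|_0\le\|u-\bar u_{h_n}\|_0+\|\bar u_{h_n}-u_{h_n}\|_0$. The first term is controlled by (\ref{Direct_Estimate_0}) together with the energy bound just proved: $\|u-\bar u_{h_n}\|_0\lesssim\eta_a(V_{h_n})\|u-\bar u_{h_n}\|_a\lesssim\eta_a(V_{h_n})\delta_{h_n}(u)$. Here is the one point that needs care: because of the nesting $V_H\subseteq V_{h_n}$, an infimum over the larger space can only be smaller, whence $\eta_a(V_{h_n})\le\eta_a(V_H)$; this lets me replace $\eta_a(V_{h_n})$ by $\eta_a(V_H)$ and bound the first term by $\lesssim\eta_a(V_H)\delta_{h_n}(u)$. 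The second term is (\ref{Estimate2}) combined with the energy-norm result above: $\|\bar u_{h_n}-u_{h_n}\|_0\le C\eta_a(V_H)\|\bar u_{h_n}-u_{h_n}\|_a\lesssim\eta_a(V_H)\delta_{h_n}(u)$. Summing the two gives $\|u-u_{h_n}\|_0\lesssim\eta_a(V_H)\delta_{h_n}(u)$.

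There is no substantial obstacle here: the corollary is a mechanical consequence of the two previously established theorems. The only things to watch are (i) the monotonicity $\eta_a(V_{h_n})\le\eta_a(V_H)$, which is what upgrades the $L^2$ estimate from the fine-space parameter $\eta_a(V_{h_n})$ to the coarse-space parameter $\eta_a(V_H)$ appearing in the statement, and (ii) keeping track that all multiplicative constants, including $\tfrac{2\gamma^p\beta}{1-\gamma^p\beta}$ and $1+C\eta_a(V_{h_n})$, are absorbed into the $\lesssim$ symbol and remain independent of $n$ under the hypothesis $\gamma^p\beta<1$.
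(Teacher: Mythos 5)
Your proposal is correct and follows essentially the same route as the paper, which simply declares the corollary a direct consequence of Lemma \ref{FEM_Estimate} and Theorem \ref{Multilevel Correction Thm}; you have merely written out the triangle-inequality splitting and the monotonicity $\eta_a(V_{h_n})\le\eta_a(V_H)$ that the paper leaves implicit. No issues.
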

\begin{proof}
This is a direct consequence of the combination of Lemma \ref{FEM_Estimate} and Theorem \ref{Multilevel Correction Thm}.
\end{proof}
\subsection{Estimate of the computational work}
In this subsection, we turn our attention to the estimate of computational work
for the full multigrid method defined in Algorithm \ref{Multilevel Correction}. It will be shown that
the full multigrid method makes solving the semilinear elliptic problem almost as cheap as
solving the corresponding linear boundary value problems.

First, we define the dimension of each level
finite element space as $N_k:={\rm dim}V_{h_k}$. Then we have
\begin{eqnarray}\label{relation_dimension}
N_k\approx\Big(\frac{1}{\beta}\Big)^{d(n-k)}N_n,\ \ \ k=1,2,\cdots, n.
\end{eqnarray}

The computational work for the second step in Algorithm \ref{Multilevel Correction} is different
from the linear elliptic problems \cite{BramblePasciak,ScottZhang,ToselliWidlund,shaidurov1995multigrid,Xu}.
In this step, we need to solve a semilinear elliptic problem (\ref{coarse problem}).
Always, some type of nonlinear iteration method (fixed-point iteration or
Newton type iteration) is adopted to solve this low dimensional semilinear elliptic problem.
In each nonlinear iteration step, it is required to assemble the matrix on the finite element
space $V_{H,h_k}$ ($k=2,\cdots,n$) which needs the computational work $\mathcal{O}(N_k)$.
Fortunately, the matrix assembling can be carried out by the parallel way easily
in the finite element space since it has no data transfer.

\begin{theorem}\label{optimal_work}
Assume we use $\vartheta$ computing-nodes in Algorithm \ref{Multilevel Correction}, the semilinear
elliptic solving in the coarse spaces $V_{H,h_k}$ ($k=2,\cdots, n$) and
$V_{h_1}$ need work $\mathcal{O}(M_H)$ and $\mathcal{O}(M_{h_1})$, respectively,
and the work of the multigrid iteration for the boundary value problem
in each level space $V_{h_k}$ is $\mathcal{O}(N_k)$ for $k=2,3,\cdots,n$.
Let $\varpi$ denote the nonlinear iteration times when we solve
the semilinear elliptic problem (\ref{coarse problem}).
Then in each computational node, the work involved
in Algorithm \ref{Multilevel Correction} has the following estimate
\begin{eqnarray}\label{Computation_Work_Estimate}
{\rm Total\ work}&=&\mathcal{O}\Big(\big(1+\frac{\varpi}{\vartheta}\big)N_n
+ M_H\log N_n+M_{h_1}\Big).
\end{eqnarray}
\end{theorem}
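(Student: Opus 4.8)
The plan is to estimate the computational work level by level and then sum over the hierarchy, exploiting the geometric decay of the space dimensions recorded in \eqref{relation_dimension}. For each level $k=2,\cdots,n$, one application of ${\rm SemilinearMG}$ consists of two pieces: the $m$ multigrid iteration steps for the linearized boundary value problem \eqref{linearized}, which by assumption cost $\mathcal{O}(N_k)$, and the solution of the low dimensional semilinear problem \eqref{coarse problem} on $V_{H,h_k}$. The latter is solved by $\varpi$ nonlinear iteration steps, and each such step requires assembling and solving on $V_{H,h_k}$; since $V_{H,h_k}=V_H+{\rm span}\{\widetilde u_{h_k}^{(\ell+1)}\}$, the solve itself costs $\mathcal{O}(M_H)$, while the matrix assembling (which involves the fine-level basis function $\widetilde u_{h_k}^{(\ell+1)}$) costs $\mathcal{O}(N_k)$. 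First I would therefore write the work of a single correction step on level $k$ as $\mathcal{O}(N_k)+\varpi\,\mathcal{O}(M_H)+\varpi\,\mathcal{O}(N_k)$, and note that the correction is repeated a fixed number $p$ of times (the inner loop over $\ell$), so $p$ is absorbed into the constant.

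Next I would bring in the parallelism: with $\vartheta$ computing-nodes the $\varpi$ matrix-assembling tasks of cost $\mathcal{O}(N_k)$ can be distributed, since the remark after Algorithm~\ref{one step} (and the discussion preceding this theorem) notes that assembling has no data transfer. Thus the per-node assembling contribution on level $k$ becomes $\mathcal{O}\big(\tfrac{\varpi}{\vartheta}N_k\big)$, giving a per-level per-node cost of the form $\mathcal{O}\big((1+\tfrac{\varpi}{\vartheta})N_k + \varpi M_H\big)$. I would then sum this over $k=2,\cdots,n$. For the $N_k$ terms, the recursion \eqref{relation_dimension} gives $\sum_{k=2}^{n}N_k\approx N_n\sum_{j=0}^{n-2}(\beta^{-d})^{j}$, a convergent geometric series (since $\beta>1$ and $d\geq 2$) bounded by a constant multiple of $N_n$. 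Hence the accumulated $N_k$ contribution is $\mathcal{O}\big((1+\tfrac{\varpi}{\vartheta})N_n\big)$.

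The coarse-solve terms $\varpi M_H$ appear once per level, so they sum to $\mathcal{O}(n\,\varpi\,M_H)$; here I would use that the number of levels satisfies $n\approx \log N_n$ (which follows from $N_n\approx \beta^{dn}$ times a constant, i.e. $n$ grows logarithmically in $N_n$), absorbing the fixed factor $\varpi p$ into the implied constant to obtain $\mathcal{O}(M_H\log N_n)$. Finally I would add the cost $\mathcal{O}(M_{h_1})$ of the initial solve on $V_{h_1}$ in the first step of Algorithm~\ref{Multilevel Correction}, which is incurred exactly once. Collecting the three contributions yields precisely \eqref{Computation_Work_Estimate}. The main obstacle, and the only place requiring care, is the bookkeeping in the per-level cost: one must correctly separate the coarse semilinear solve (cost $M_H$, not parallelized in the stated estimate) from the assembling cost (cost $N_k$, parallelized across the $\vartheta$ nodes), and confirm that the geometric series in the $N_k$ terms dominates while the $M_H$ terms accumulate the extra logarithmic factor; mis-assigning which cost is parallelized or which sums geometrically would change the final form of the bound.
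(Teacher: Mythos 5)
Your proposal is correct and follows essentially the same route as the paper: a per-level, per-node cost of the form $\mathcal{O}\big((1+\tfrac{\varpi}{\vartheta})N_k + M_H\big)$, a geometric sum over $k$ via \eqref{relation_dimension} to bound the $N_k$ contributions by $\mathcal{O}\big((1+\tfrac{\varpi}{\vartheta})N_n\big)$, the relation $n\approx\log N_n$ for the accumulated coarse-solve terms, plus the one-time $\mathcal{O}(M_{h_1})$. The only cosmetic difference is that the paper reads its hypothesis as $M_H$ being the \emph{total} cost of each coarse semilinear solve (so no factor $\varpi$ multiplies $M_H$), whereas you write $\varpi M_H$ per level and then absorb $\varpi$ into the implied constant; the theorem's hypothesis supports the former reading directly and avoids that slight inconsistency with keeping $\varpi$ explicit in the $N_n$ term.
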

\begin{proof}
We use $W_{k}$ to denote the work involved in each correction step on the $k$-th
finite element space $V_{h_{k}}$.
From the definition of Algorithm \ref{Multilevel Correction}, we have the following estimate
\begin{eqnarray}\label{work_k}
W_k&=&\mathcal{O}\left(N_k +M_H+\varpi\frac{N_k}{\vartheta}\right).
\end{eqnarray}
Based on the property (\ref{relation_dimension}), iterating (\ref{work_k}) leads to
\begin{eqnarray}\label{Work_Estimate}
\text{Total work} &=& \sum_{k=1}^nW_k\nonumber =
\mathcal{O}\left(M_{h_1}+\sum_{k=2}^n
\Big(N_k + M_H+\varpi\frac{N_k}{\vartheta}\Big)\right)\nonumber\\
&=& \mathcal{O}\left(\sum_{k=2}^n\Big(1+\frac{\varpi}{\vartheta}\Big)N_k
+ (n-1) M_H + M_{h_1}\right)\nonumber\\
&=& \mathcal{O}\left(\sum_{k=2}^n
\Big(\frac{1}{\beta}\Big)^{d(n-k)}\Big(1+\frac{\varpi}{\vartheta}\Big)N_n
+ M_H\log N_n+M_{h_1}\right)\nonumber\\
&=& \mathcal{O}\left(\big(1+\frac{\varpi}{\vartheta}\big)N_n
+ M_H\log N_n+M_{h_1}\right).
\end{eqnarray}
This is the desired result and we complete the proof.
\end{proof}
\begin{remark}
Since we always have a good enough initial solution $\widetilde{u}_{h_k}^{(\ell+1)}$
in the second step of Algorithm \ref{one step},
then solving the semilinear elliptic problem (\ref{coarse problem}) never
needs many nonlinear iterations.
In this case, the complexity in each computational node will be $\mathcal{O}(N_n)$ provided
$M_H\ll N_n$ and $M_{h_1}\leq N_n$. For more difficult nonlinear problems, the complexity
in each computational node can also be bounded to $\mathcal{O}(N_n)$ by the parallel way with enough
computational nodes.
\end{remark}

\section{Numerical results}
In this section, four numerical experiments are presented to verify the
theoretical analysis and efficiency of Algorithm \ref{Multilevel Correction}.
We will check different nonlinear terms which include polynomial, exponential functions
and a function only having bounded first order derivative. Furthermore, we also
investigate the performance of the full multigrid method on the adaptively refined
meshes. In all examples, we choose $m=2$ and $p=1$.

\subsection{Example 1}
We consider the following semilinear elliptic problem:
\begin{equation}\label{semilinear equation example1}
\left\{
\begin{array}{rcl}
-\Delta u+u^3&=& g, \quad \text{in } \Omega,\\
u&=& 0, \quad \text{on } {\partial\Omega},
\end{array}
\right.
\end{equation}
where $\Omega=(0,1)^3$. We choose the right hand side term $g$
such that the exact solution is given by
\begin{eqnarray}
u = \sin(\pi x)\sin(\pi y) \sin(\pi z).
\end{eqnarray}

\begin{figure}[htb]
\centering
\includegraphics[width=7cm]{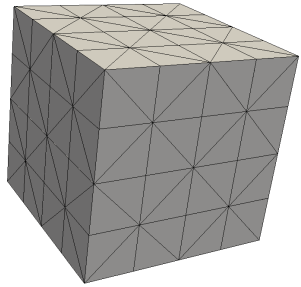}
\caption{\small\texttt The initial mesh for Example 1}
\label{Initial_Cube}
\end{figure}

We give the numerical results for the approximate solutions by
Algorithm \ref{Multilevel Correction}.
Figure \ref{Initial_Cube} shows the initial triangulation.
Figure \ref{error0} shows the error estimates and the CPU time in seconds.
It is shown in  Figure \ref{error0} that the
 approximate solution by Algorithm \ref{Multilevel Correction}
has the optimal convergence order and the linear computational complexity
which coincides with the theoretical results in Theorems \ref{one step thm},
\ref{Multilevel Correction Thm} and Corollary \ref{corollary}.

\begin{figure}[htb]
\centering
\includegraphics[width=6cm]{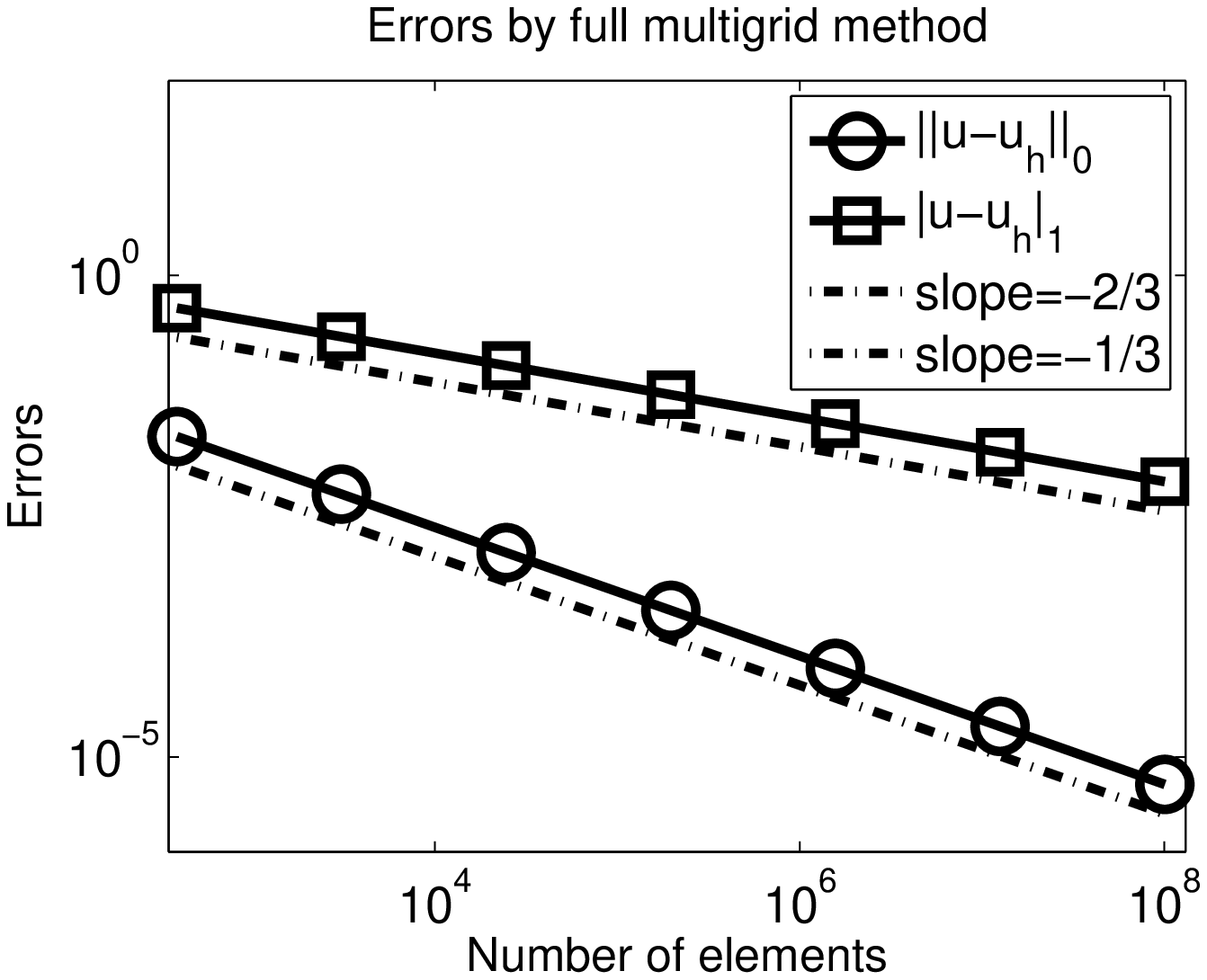}
\includegraphics[width=6cm]{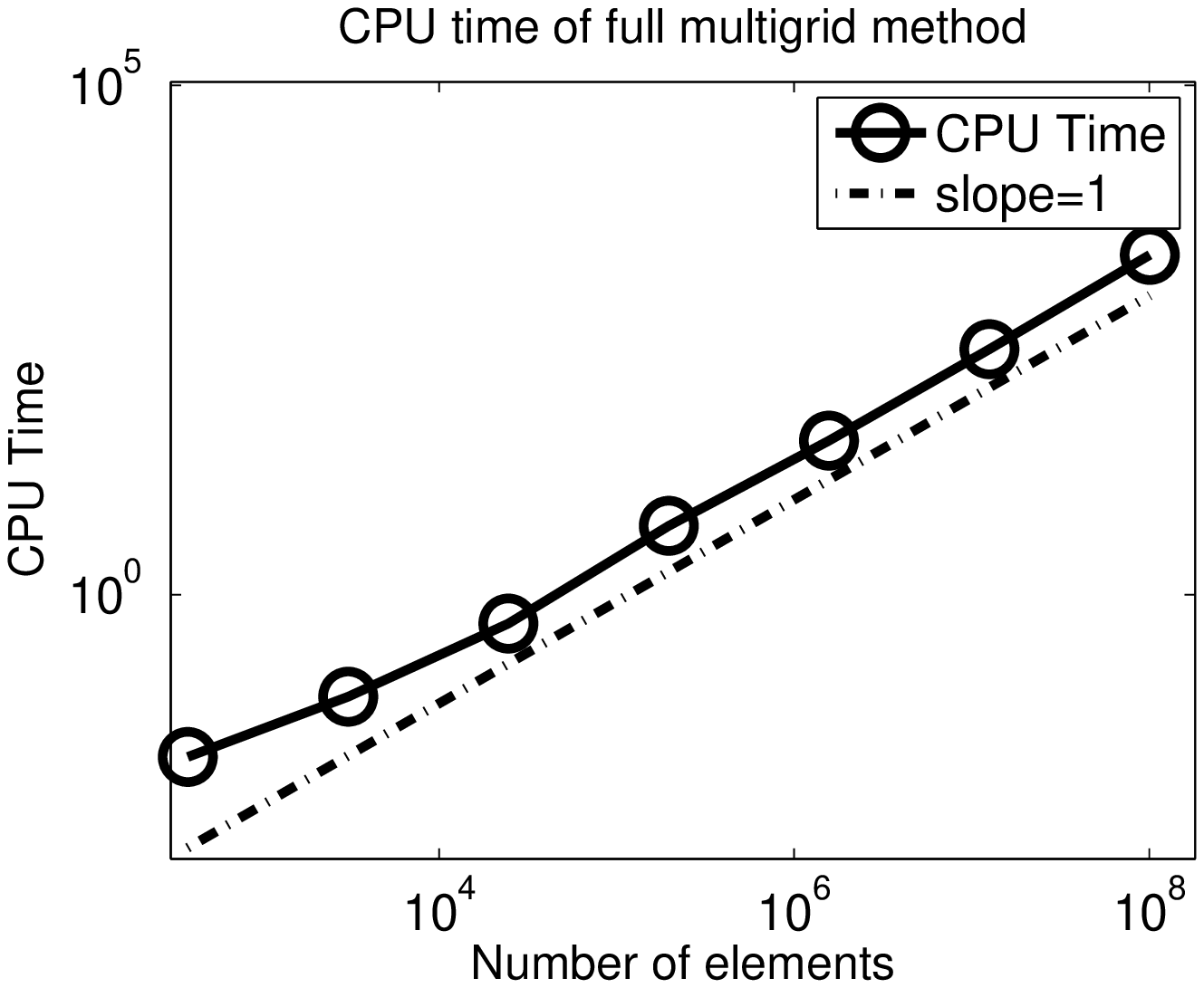}
\caption{Errors and CPU time (in seconds) of Algorithm \ref{Multilevel Correction}  for Example 1}
\label{error0}
\end{figure}
\subsection{Example 2}
In the second example, we solve the following semilinear elliptic problem:
\begin{equation}\label{semilinear equation example2}
\left\{
\begin{array}{rcl}
-\Delta u-e^{-u}&=& 1, \quad \text{in } \Omega,\\
u&=& 0, \quad \text{on } {\partial\Omega},
\end{array}
\right.
\end{equation}
where $\Omega=(0,1)^3$.
Since the exact solution is not known, we choose an adequate accurate
approximate solution on a fine enough mesh as the exact one.

Algorithm \ref{Multilevel Correction} is applied to this example.
Figure \ref{Initial_Cube} shows the initial mesh.
Figure \ref{error1} gives the corresponding
numerical results which also show
the optimal convergence rate and linear computational complexity of Algorithm \ref{Multilevel Correction}.

\begin{figure}[htb]
\centering
\includegraphics[width=6cm]{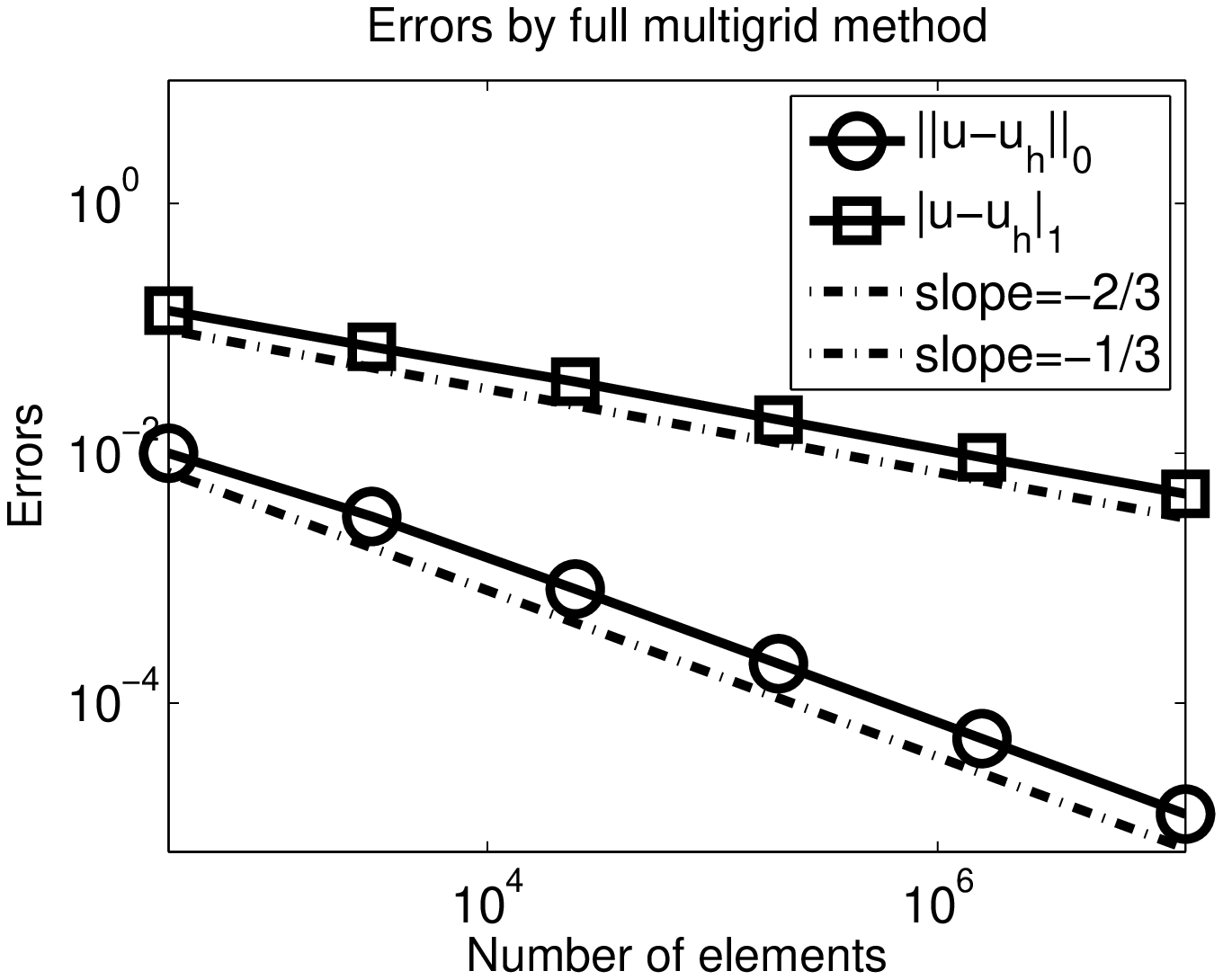}
\includegraphics[width=6cm]{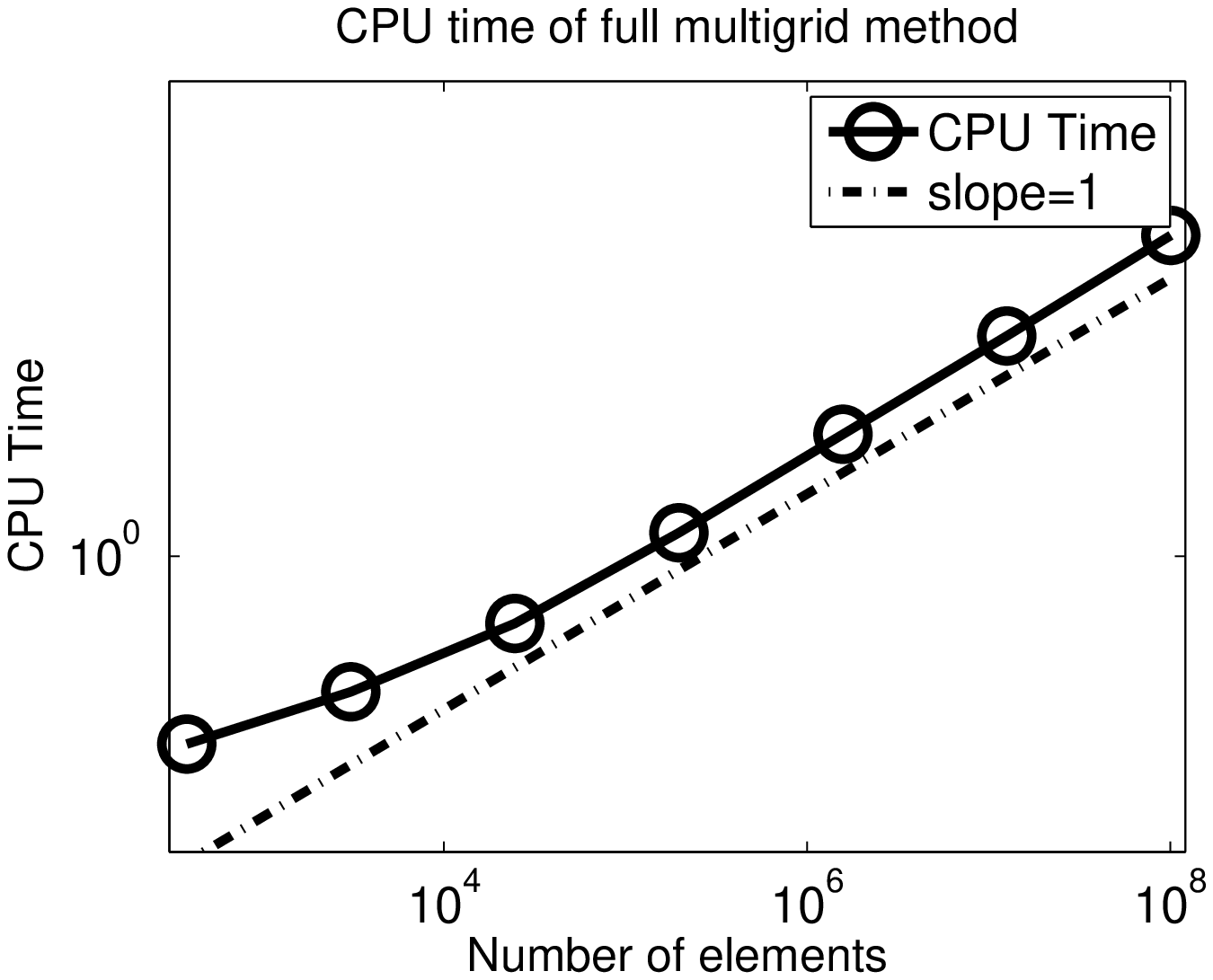}
\caption{Errors and CPU time (in seconds) of Algorithm \ref{Multilevel Correction}  for Example 2}
\label{error1}
\end{figure}
\subsection{Example 3}
In the third example, we solve the following semilinear elliptic problem:
\begin{equation}\label{semilinear equation example3}
\left\{
\begin{array}{rcl}
-\Delta u+f(x,u)&=& g, \quad \text{in } \Omega,\\
u&=& 0, \quad \text{on } {\partial\Omega},
\end{array}
\right.
\end{equation}
with
\begin{equation}
f(x,u)=
\left\{
\begin{array}{rcl}
u^{3/2}, \quad  &\text{if}& u\geq 0,\\
-u^{3/2},\quad &\text{if}& u<0,
\end{array}
\right.
\end{equation}
where $\Omega=(0,1)^3$.
We choose the right hand side term $g$ such that the exact solution is given by
\begin{eqnarray}
u = \sin(2\pi x)\sin(2\pi y) \sin(2\pi z).
\end{eqnarray}
In this example, the nonlinear term $f(x,v)$ has bounded first order derivative $\partial f(x,v)/\partial v$
but unbounded second order derivative $\partial^2 f(x,v)/\partial^2 v$.  Then the
methods given in \cite{HuangShiTangXue,shaidurov1995multigrid} can not be used for this example.

Algorithm \ref{Multilevel Correction} is applied to this example.
Figure \ref{Initial_Cube} shows the initial mesh. Figure \ref{error3} gives the corresponding
numerical results which also show the optimal convergence rate and linear
computational complexity of Algorithm \ref{Multilevel Correction}.

\begin{figure}[htb]
\centering
\includegraphics[width=6cm]{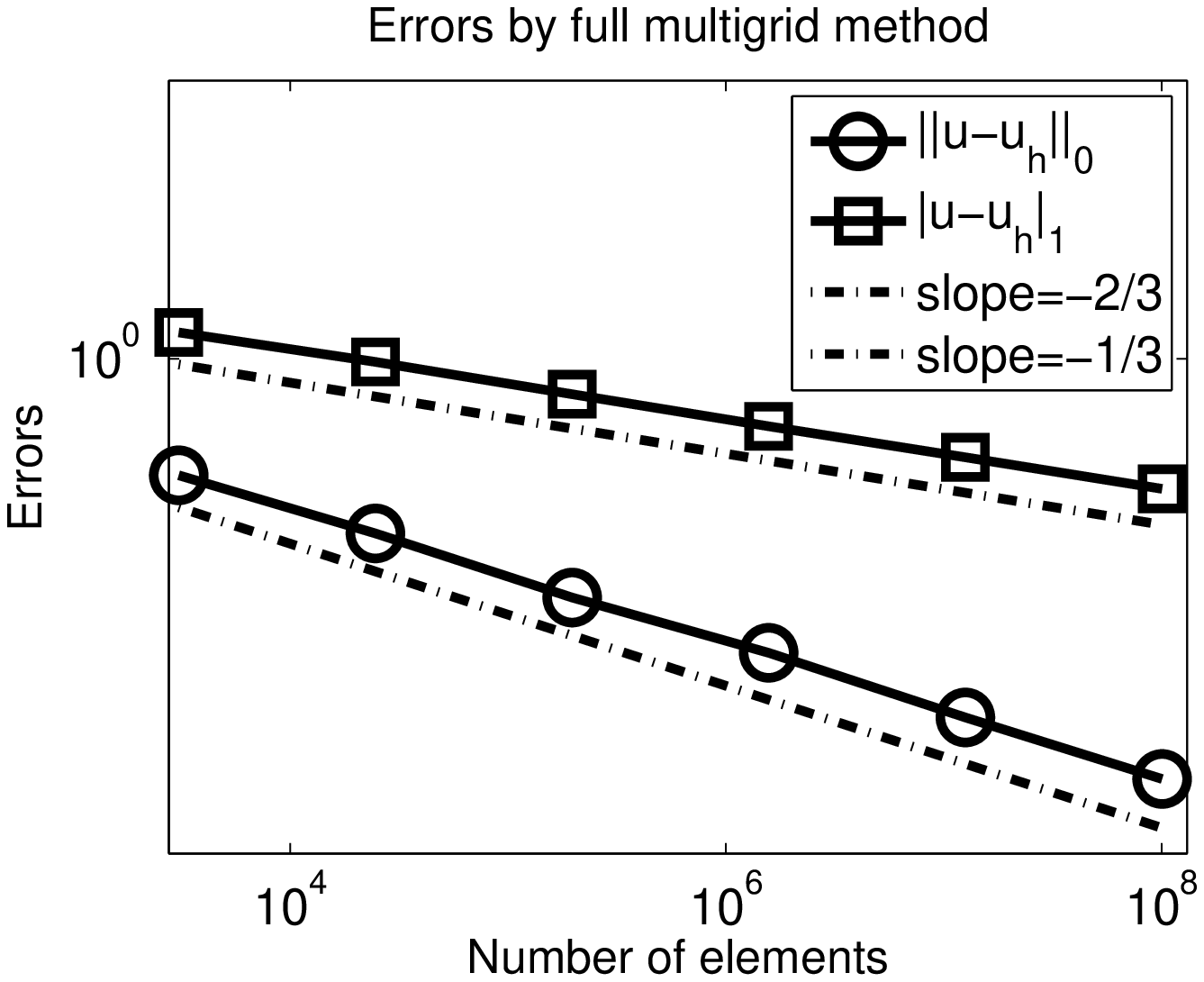}
\includegraphics[width=6cm]{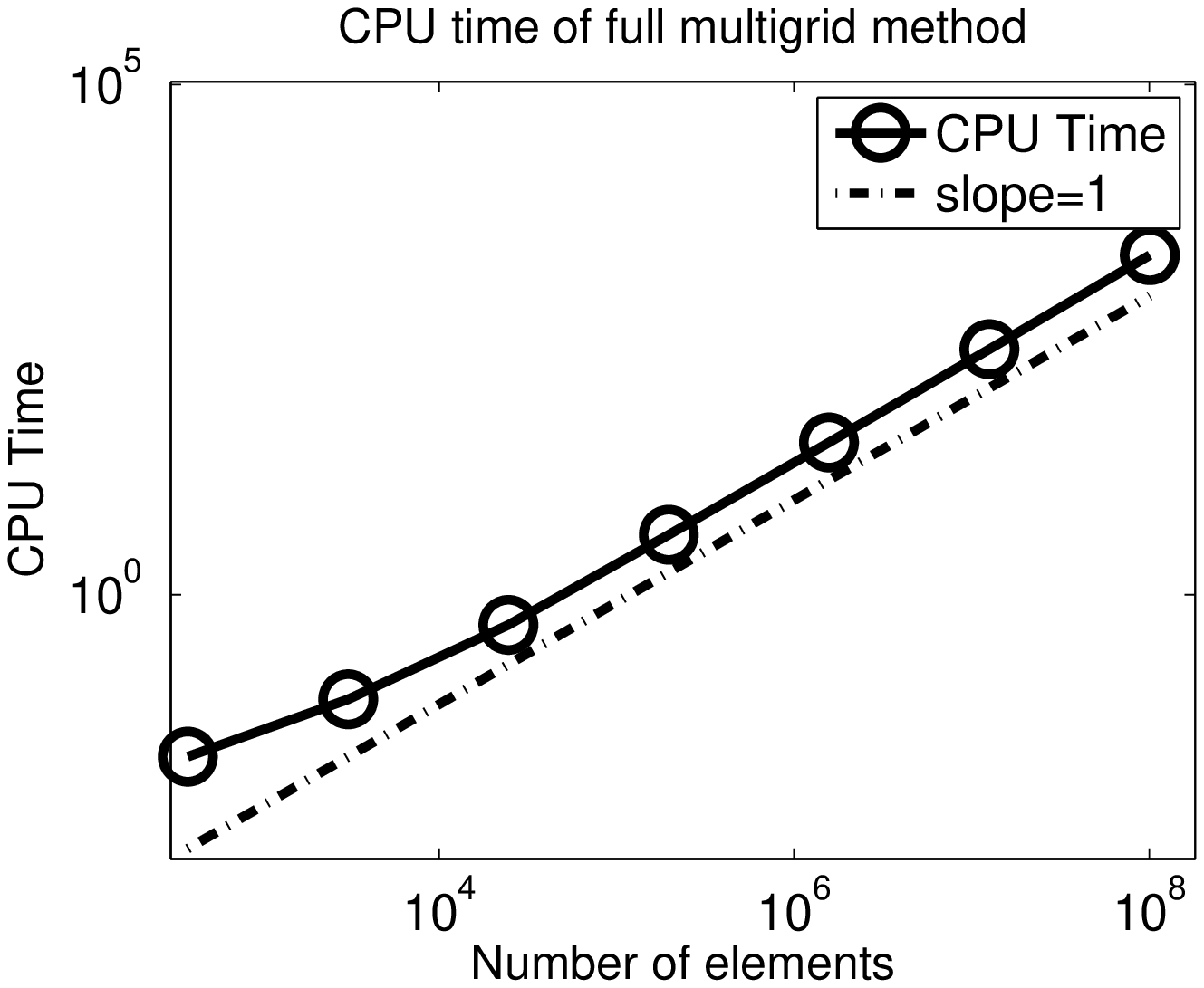}
\caption{Errors and CPU time (in seconds) of Algorithm \ref{Multilevel Correction}  for Example 3}
\label{error3}
\end{figure}

\subsection{Example 4}
In the last example, we solve the following semilinear elliptic problem:
\begin{equation}\label{semilinear equation example4}
\left\{
\begin{array}{rcl}
-\Delta u+u^{3/2}&=& 1 \quad \text{in } \Omega,\\
u&=& 0 \quad \text{on } {\partial\Omega},
\end{array}
\right.
\end{equation}
where $\Omega=(-1,1)^3\setminus [0,1)^3$.
Due to the reentrant corner of $\Omega$, the exact solution with singularities is
expected. The convergence order for approximate solution is less than the order predicted
by the theory for regular solutions.  Thus, the adaptive refinement is adopted to couple
with the full multigrid  method described in Algorithm \ref{Multilevel Correction} (cf. \cite{LXX}).

Since the exact solution is not known, we also choose an adequately accurate
approximation on a fine enough mesh as the exact one. We give the numerical
results of the full multigrid method in which the sequence of meshes
$\cT_{h_1},\cdots,\cT_{h_n}$ is produced by the adaptive refinement with the following a posteriori
error estimator
\begin{eqnarray}
\eta^2(v,K):=h_K^2\| {\cR}_{K}(v)\|_{0,K}^2
+\sum_{e\in\cE_I,e\subset \partial K}h_e\| {\cJ}_{e}(v)\|_{0,e}^2,
\end{eqnarray}
where  the element residual  ${\cR}_{K}(v)$ and the jump residual ${\cJ}_e(v)$ are defined as follows:
\begin{eqnarray}
&&{\cR}_{K}(v) := g-f(x,v)-\nabla\cdot(\mathcal{A}\nabla v), \qquad \text{in } K\in \cT_{h_k}, \\
&&{\cJ}_{e}(v) := -\mathcal{A}\nabla v^+\cdot\nu^+-\mathcal{A}\nabla v^-\cdot\nu^-
:=[\mathcal{A}\nabla v]_e\cdot \nu_e,  \quad\text{on } e\in \cE_I.
\end{eqnarray}
Here  $\cE_I$ denotes the set of interior faces (edges or sides) of $\cT_{h_k}$ and
$e$ is the common side of elements $K^+$ and $K^-$ with the unit outward normals $\nu^+$ and
$\nu^-$, respectively, and $ \nu_e=\nu^- $.

Figure \ref{adaptive mesh} shows the mesh after $15$ refinements and the corresponding cross section.
Figure \ref{error4} shows the numerical results by Algorithm \ref{Multilevel Correction}.
From Figure \ref{error4}, we can find that the full multigrid method can
also work on the adaptive family of meshes and obtain the optimal accuracy.
The full multigrid method can be coupled with the adaptive refinement naturally to produce
a type of adaptive finite element method for semilinear elliptic problem where the direct nonlinear iteration
in the adaptive finite element space is not required. This can also improve the overall efficiency of the adaptive
finite element method for semilinear elliptic problem solving. For more information, please refer to
the paper \cite{LXX}.

\begin{figure}[htb]
\centering
\includegraphics[width=5cm]{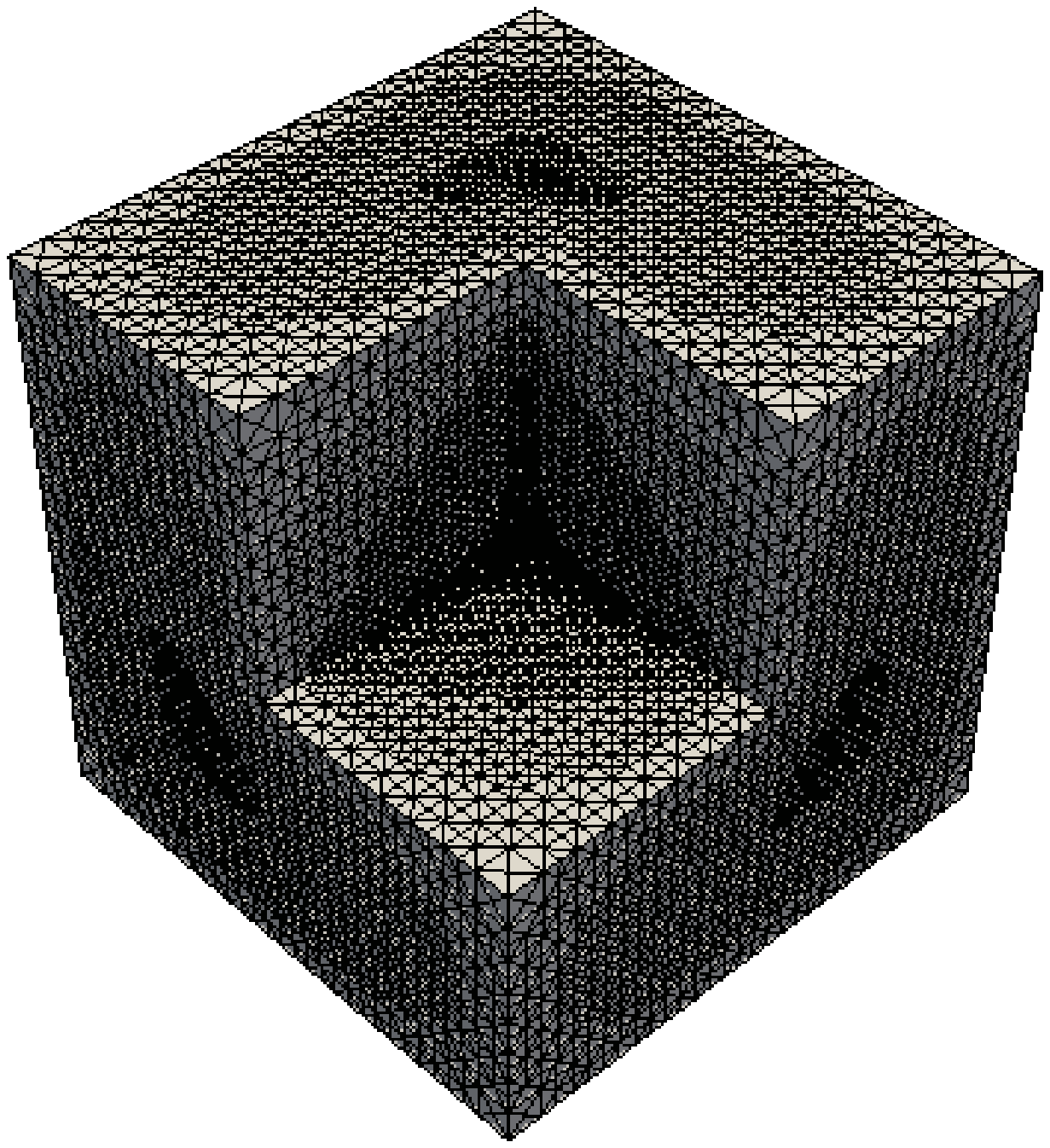} \ \ \ \ \
\includegraphics[width=5cm]{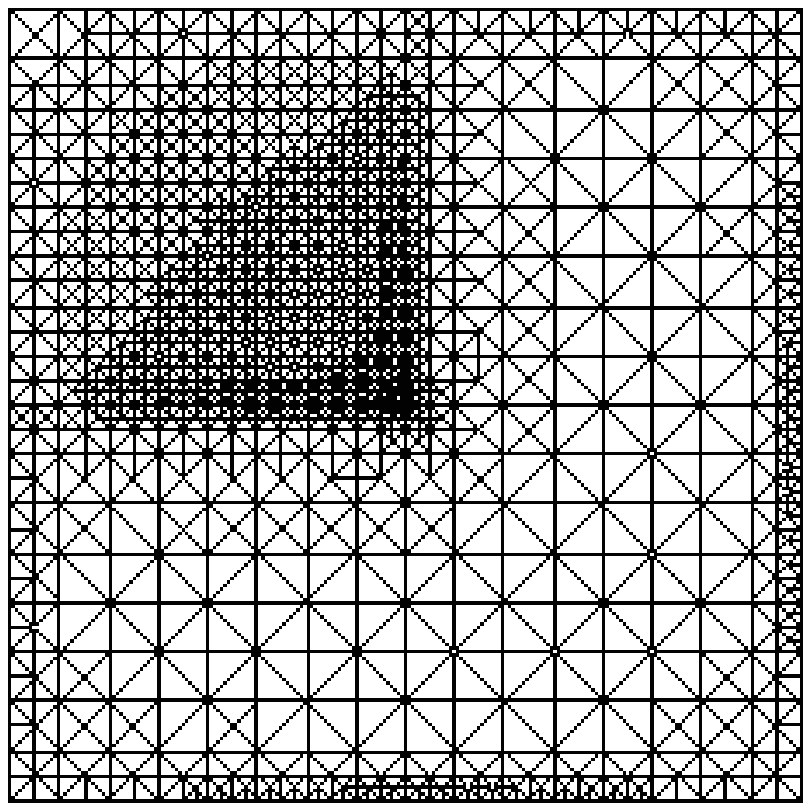}
\caption{The triangulations after $15$ adaptive refinements and the corresponding cross section for Example 4}
\label{adaptive mesh}
\end{figure}

\begin{figure}[htb]
\centering
\includegraphics[width=6cm]{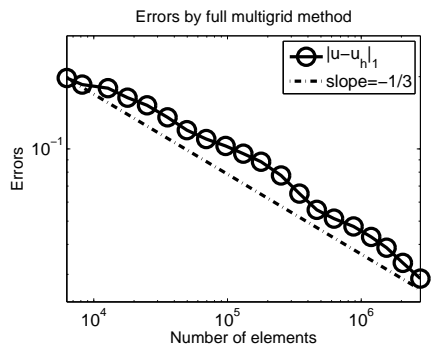}\label{ex3-e1}
\caption{Errors of Algorithm \ref{Multilevel Correction}  for Example 4}
\label{error4}
\end{figure}

\section{Concluding remarks}
In this paper, a full multigrid method is proposed for solving semilinear elliptic
equations by the finite element method. The corresponding estimates of error 
and computational work are given. The main idea is to transform the solution of 
the semilinear problem into a series of solutions of the corresponding linear 
boundary value problems on the sequence of finite element spaces and semilinear 
problems on a very low dimensional space. Compared with the existing multigrid 
methods which require the bounded second order derivatives of the nonlinear 
 term, the proposed method only needs the Lipschitz continuation in some sense of the nonlinear term.
Based on the full multigrid method, all existing efficient solvers for the linear elliptic problems can
serve as solvers for the semilinear equations. The idea and algorithm in this paper can be extended
to other nonlinear problems such as Navier-Stokes problems and phase field models.


\end{document}